\numberwithin{equation}{section}
\newtheorem{theorem}{Theorem}[section]
\newtheorem{lemma}[theorem]{Lemma}
\newtheorem{proposition}[theorem]{Proposition}
\newtheorem{remark}[theorem]{Remark}
\newtheorem{conjecture}[theorem]{Conjecture}
\newtheorem{definition}[theorem]{Definition}
\newtheorem*{thm*}{Main Theorem}
\begin{document}

\makeatletter

\newdimen\bibspace
\setlength\bibspace{2pt}
\renewenvironment{thebibliography}[1]{%
 \section*{\refname 
       \@mkboth{\MakeUppercase\refname}{\MakeUppercase\refname}}%
     \list{\@biblabel{\@arabic\c@enumiv}}%
          {\settowidth\labelwidth{\@biblabel{#1}}%
           \leftmargin\labelwidth
           \advance\leftmargin\labelsep
           \itemsep\bibspace
           \parsep\z@skip     %
           \@openbib@code
           \usecounter{enumiv}%
           \let\p@enumiv\@empty
           \renewcommand\theenumiv{\@arabic\c@enumiv}}%
     \sloppy\clubpenalty4000\widowpenalty4000%
     \sfcode`\.\@m}
    {\def\@noitemerr
      {\@latex@warning{Empty `thebibliography' environment}}%
     \endlist}

\makeatother

\pdfbookmark[2]{ A weak form of a conjecture on vanishing of group cohomology for  blocks}{beg}

\title{{\bf  A weak form of a conjecture on vanishing of group cohomology for  blocks} }

\footnotetext{~\\
1. Department of Mathematics, Hubei Key Laboratory of Applied Mathematics, Hubei University, Wuhan, 430062, China\\
2. School of Mathematical Sciences, Peking University, Beijing, 100871, China\\
\hspace*{2 ex}Heguo Liu's E-mail: ghliu@hubu.edu.cn

Xingzhong Xu's E-mail: xuxingzhong407@hubu.edu.cn; xuxingzhong407@126.com

Jiping Zhang's E-mail:  jzhang@pku.edu.cn}

\author{{\small{Heguo Liu$^{1}$, Xingzhong Xu$^{1}$, Jiping Zhang$^{2}$} }
}

\date{Feb/15/2021}
\maketitle

{\small

\noindent\textbf{Abstract.} {\small{ In this paper, we get an elementary and important lemma(See Lemma 3.2) which is about pushout and pullback of modules. 
And we prove a weak form of a long open conjecture on vanishing of group cohomology for blocks.}}\\

\noindent\textbf{Keywords: }{\small {group cohomology; vanishing.}}

\noindent\textbf{Mathematics Subject Classification (2010):}  }

\section{\bf Introduction}

Let $G$ be a finite group and $k$ be a field. In this paper,  all $kG$-module are finite generated. We all know that:
If $V, W$ are simple  simple $kG$-modules, then $V, W$ are isomorphic to each other if and only if
$$\mathrm{Hom}_{kG}(V, W)\neq 0.$$

What's about the case that $V, W$ are just in the same block? There is a long conjecture as follows:

\begin{conjecture}\cite[Conjecture 1.1]{Li}  Let $G$ be a finite group, $p$ a prime, $k$ a field of characteristic $p$.
If $V$ is a simple $kG$-mod in the principal block of $kG$, then
there exists $i\in \mathbb{N}$ such that
$$H^i(G, V)=\mathrm{Ext}^i_{kG}(k, V)\neq 0.$$
\end{conjecture}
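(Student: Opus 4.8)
The plan is to recast the conclusion as a statement about which projectives occur in the minimal projective resolution of the trivial module, and then to propagate nonvanishing outward from $k$ along the Ext-quiver of the principal block $B_0$, using the pushout/pullback lemma (Lemma 3.2) to certify that the extension classes produced at each step are genuinely non-split. Throughout I may assume $k$ algebraically closed (cohomology and block membership are unaffected by this standard base change) and $p\mid|G|$ (otherwise $B_0=\{k\}$ and the claim is the trivial $H^0(G,k)=k\neq0$); I may also assume $V\neq k$, since $H^0(G,k)\neq0$ disposes of the trivial module.

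First I would fix a minimal projective resolution $\cdots\to P_1\to P_0\to k\to0$ of $k$ over $kG$. Because the resolution is minimal, every differential in $\mathrm{Hom}_{kG}(P_\bullet,V)$ vanishes for $V$ simple, so
$$H^i(G,V)=\mathrm{Ext}^i_{kG}(k,V)\cong\mathrm{Hom}_{kG}(P_i,V),$$
and hence $H^i(G,V)\neq0$ for some $i$ if and only if the projective cover $P_V$ occurs as a summand of some $P_i$; equivalently $V$ lies in the head of some syzygy $\Omega^i k$. Since $kG$ is symmetric one has $\mathrm{Ext}^n_{kG}(M,N)\cong\mathrm{Ext}^n_{kG}(N^*,M^*)$, giving the dual reading $H^i(G,V)\cong\mathrm{Ext}^i_{kG}(V^*,k)$; thus the conjecture is equivalent to the assertion that $P_k$ occurs in the minimal resolution of every simple module of $B_0$. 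All the projectives appearing in these resolutions lie in $B_0$, so the problem is genuinely internal to the principal block $B_0$.

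Next I would install the self-dual case for free. The Tate cohomology $\widehat{\mathrm{Ext}}^*_{kG}(k,V)=\widehat H^*(G,V)$ has support $\mathcal V_G(k)\cap\mathcal V_G(V)=\mathcal V_G(V)$, which is nonzero because a simple module in $B_0$ is never projective when $p\mid|G|$; hence this Tate cohomology is nonzero in infinitely many degrees. Its positive part is $H^{>0}(G,V)$ and, by Tate duality, its negative part is dual to $H^{>0}(G,V^*)$, so one obtains the dichotomy $H^*(G,V)\neq0$ or $H^*(G,V^*)\neq0$. For $V\cong V^*$ this already is the full conclusion, so only the non-self-dual simples remain, and for those the task is precisely to rule out the asymmetric scenario $H^{>0}(G,V)=0$ while $H^{>0}(G,V^*)\neq0$.

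The crux is to produce an explicit nonzero cohomology class attached to $V$ itself. I would choose a directed path $k=S_0\to S_1\to\cdots\to S_n=V$ in the Ext-quiver of $B_0$ (an arrow $S\to T$ meaning $\mathrm{Ext}^1_{kG}(S,T)\neq0$) and propagate a nonzero class forward: given $0\neq\xi\in\mathrm{Ext}^{i}_{kG}(k,S_{j-1})$, represented by an $i$-fold extension ending in $S_{j-1}$, and $0\neq\eta\in\mathrm{Ext}^1_{kG}(S_{j-1},S_j)$, the pushout/pullback lemma is meant to realize the Yoneda splice $\eta\circ\xi\in\mathrm{Ext}^{i+1}_{kG}(k,S_j)$ as a concrete extension module and to certify that it does not split, yielding $0\neq\eta\circ\xi$ and hence $H^{i+1}(G,S_j)\neq0$; iterating along the path would deliver $H^*(G,V)\neq0$. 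Here I expect the two real obstacles to live. The first is directed reachability: I must know that $V$ is reachable from $k$ by arrows (not merely in the underlying undirected graph), which I would attempt to extract from the connectedness of the block together with the arrow-reversing symmetry $S\mapsto S^*$ coming from $\mathrm{Ext}^1_{kG}(S,T)\cong\mathrm{Ext}^1_{kG}(T^*,S^*)$. The second, and decisive, obstacle is the possible vanishing of the Yoneda products $\eta\circ\xi$: relations in the cohomology algebra can annihilate a naive splice, and it is exactly this phenomenon that separates the full conjecture from the dichotomy above. The role I would demand of Lemma 3.2 is to construct, at each step, a pushout/pullback representative whose non-splitness is forced by a head/socle calculation in $B_0$, thereby choosing $\xi$ and $\eta$ so that their splice survives; controlling this choice coherently along the whole path—so that the symmetry between $V$ and $V^*$ is genuinely broken—is the step I expect to be hardest and where the argument must go beyond support-variety and Tate-duality input.
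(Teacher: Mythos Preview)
The statement you are attempting to prove is Conjecture~1.1, which the paper does \emph{not} prove; it is presented as a long-standing open problem, and the paper establishes only the weak form (the Main Theorem), namely that $k$ and $V$ are linked by a chain of modules with nonzero $\mathrm{Ext}$ between consecutive terms. So there is no ``paper's own proof'' to compare against, and your proposal must be judged on whether it actually closes the gap to the full conjecture.

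It does not, and you have in fact correctly located the failure yourself. The decisive step is your expectation that Lemma~3.2 (together with Remark~3.3) will certify that the Yoneda splice $\eta\circ\xi$ is nonzero. It cannot. What Lemma~3.2 does is construct the spliced $2$-extension explicitly via a pushout $W_1$ and a pullback $W_2$, and Remark~3.3 shows only that the two constituent \emph{short} exact sequences $0\to L\to W_1\to S\to 0$ and $0\to T\to W_2\to L\to 0$ are individually non-split. That is strictly weaker than the $2$-extension representing a nonzero class in $\mathrm{Ext}^2$: under the Yoneda equivalence relation a long exact sequence can be equivalent to the zero extension even when none of its short-exact pieces split. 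No ``head/socle calculation in $B_0$'' forced by the pushout/pullback description rules this out in general, and indeed this is precisely why the paper retreats to the weak form: it records the chain of nonzero $\mathrm{Ext}^{f(i)}(V_i,V_{i+1})$ rather than a single nonzero $\mathrm{Ext}^n(k,V)$. Your directed-reachability step is also not secured---the duality $S\mapsto S^\ast$ reverses arrows but does not by itself produce a directed path from $k$ to an arbitrary $V$ (the paper only arranges the \emph{first} arrow to point outward and then treats both orientations at each subsequent step)---but even granting reachability, the Yoneda-vanishing obstruction remains and is exactly the content of the open conjecture.
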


We all know that $\mathrm{Ext}^i_{kG}(V, W)\cong \underline{\mathrm{Hom}}_{kG}(\Omega^n(V), W)$.
Here, $\underline{\mathrm{Hom}}_{kG}(\Omega^n(V), W)$ is the morphism set of the stable module category $\mathrm{Stmod}(kG)$.
$\Omega^n(V)$ means the n$^{th}$ Heller of $V$.

There are a few partial results for this conjecture. In \cite{Li, LiS1}, Linnell and Stammbach proved that this conjecture holds for $p$-soluble groups. In \cite{LiS2}, Linnell and Stammbach proved that this conjecture holds for $p$-constrained groups.
In \cite{BCR}, Benson, Carlson and Robinson discussed this conjecture and gave a equivalent condition to this conjecture when $V$ is just a non-projective module. In page 42 of \cite{BCR},
they said this question again.

In this paper, we prove the following main theorem:

\begin{thm*} [Weak form of the Conjecture 1.1] Let $G$ be a finite group, $p$ a prime, $k$ a field of characteristic $p$.
If $T$ is a simple $kG$-modules in the principal block of $kG$,
then there is a list of $kG$-modules $k=V_1, V_2,\ldots, V_n=T$
such that for each $i=1,\ldots, n-1,$ the modules $V_i $ and $ V_{i+1}$
satisfy that
$$\mathrm{Ext}_{kG}^{f(i)}(V_{i}, V_{i+1})\neq 0$$
where $f(i)$ are positive integers for $i=1,2,\ldots, n-1$.
\end{thm*}

If we can choose that $n=2$, the Main theorem implies the Conjecture 1.1 holds. So the Main theorem is a weak form of the Conjecture 1.1.
Here, we need to remark that the above theorem and the Conjecture 1.1 are not true for non-principal block because there exist  simple and projective $kG$-modules who belong to some non-principal blocks.

$Structure~ of ~ the~ paper:$
After recalling the basic definitions and properties of modules and blocks  in the Section 2,
we prove two lemmas about exact sequences in the Section 3, and we lists some properties of $\mathrm{Ext}$ functor in the Section 4.
In the Section 5, we prove the Main theorem by induction.

\section{\bf Some properties of blocks}

In this section we collect some known results about blocks, we refer to \cite{W}.

We make the definition that a block of a ring $A$ with identity is a primitive
idempotent in the center $Z(A)$.

We will say that an $A$-module $U$(left module) belongs to (or lies in) the block $e$ if $eU = U$.

\begin{definition} We put an equivalence relation on the set of simple modules of an algebra $A$:
define $S\sim T$ if either $S\cong T$ or there is a list of simple $A$-modules $S=S_1, S_2,\ldots, S_n=T$
 so that for each $i=1,\ldots, n-1,$ the modules $S_i $ and $ S_{i+1}$
appear in a non-split short exact sequence of $A$-modules
$0\to U\to V\to  W\to 0 $  with $\{U,W\}=\{S_i, S_{i+1}\}$.
\end{definition}

\begin{proposition}\cite[Proposition 12.1.7]{W} Let $A$ be a finite-dimensional algebra over a field $k$. The
following are equivalent for simple $A$-modules $S$ and $T$:

(1) $S$ and $T$ lie in the same block.

(2) There is a list of simple A-modules $S=S_1, S_2,\ldots, S_n=T$ so that $S_i$
and $S_{i+1}$ are both composition factors of the same indecomposable projective module, for each $i=1,\ldots, n-1.$

(3) $S\sim T$.
\end{proposition}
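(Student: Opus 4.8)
The plan is to establish the cycle of implications $(1)\Rightarrow(3)\Rightarrow(2)\Rightarrow(1)$; only the first of these is substantial, and it carries all the work. Two standard facts about blocks will be used freely. First, if $U$ belongs to a block $b$ and $U'$ belongs to a different block $b'$, then $\mathrm{Ext}^i_A(U,U')=0$ for every $i\ge 0$: a projective resolution $P_\bullet\to U$ may be replaced by $bP_\bullet\to U$ (each $bP_i$ is a direct summand of $P_i$, hence projective, and $b(-)$ is exact), and $\ho_A$ of a module lying in $bA$ into a module lying in $b'A$ vanishes because $bb'=0$ for distinct primitive central idempotents. Second, for every simple module $S$ the projective cover $P(S)$ lies in the same block as $S$ (so all composition factors of $P(S)$ lie in that block), and $S$ occurs among them as the top $P(S)/\mathrm{rad}\,P(S)$.

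The two easy implications go as follows. For $(2)\Rightarrow(1)$: if consecutive $S_i,S_{i+1}$ are composition factors of a common indecomposable projective $P$, then $P$ lies in a single block, so by the second fact above $S_i$ and $S_{i+1}$ lie in the same block; chaining over $i=1,\dots,n-1$ gives that $S$ and $T$ lie in the same block. For $(3)\Rightarrow(2)$: a non-split short exact sequence with ends $\{S_i,S_{i+1}\}$ exists precisely when one of $\mathrm{Ext}^1_A(S_i,S_{i+1})$, $\mathrm{Ext}^1_A(S_{i+1},S_i)$ is nonzero, and since condition (2) is symmetric in the two modules we may assume $\mathrm{Ext}^1_A(S_i,S_{i+1})\neq 0$. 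Then there is an indecomposable length-two module $E$ with socle $S_{i+1}$ and top $S_i$; since $P(S_i)$ is the projective cover of $E$, both $S_i$ and $S_{i+1}$ are composition factors of the indecomposable projective $P(S_i)$, which is exactly (2) (the case $S_i\cong S_{i+1}$ being trivial).

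For $(1)\Rightarrow(3)$, consider the graph on isomorphism classes of simple $A$-modules with an edge $\{S,S'\}$ whenever $\mathrm{Ext}^1_A(S,S')\neq 0$ or $\mathrm{Ext}^1_A(S',S)\neq 0$; the assertion $S\sim T$ says precisely that $S$ and $T$ lie in the same connected component. Let $\mathcal A$ be a union of connected components and $\mathcal B$ the complementary set of simples. For a finitely generated module $M$ write $\varepsilon_{\mathcal A}(M)$ for the sum of all submodules whose composition factors all lie in $\mathcal A$; this is again such a submodule (being a quotient of the direct sum of those submodules), hence the unique largest one, and similarly $\varepsilon_{\mathcal B}(M)$. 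The \emph{key lemma} is that
\[ M=\varepsilon_{\mathcal A}(M)\oplus\varepsilon_{\mathcal B}(M)\qquad\text{for every finitely generated }A\text{-module }M. \]
I would prove it by induction on the composition length of $M$. Pick a simple submodule $S_0\subseteq M$, say $S_0\in\mathcal A$ (the other case is symmetric); by induction $M/S_0$ splits as $(M/S_0)_{\mathcal A}\oplus(M/S_0)_{\mathcal B}$, and pulling back the $\mathcal B$-summand gives a submodule $N\subseteq M$ with $0\to S_0\to N\to(M/S_0)_{\mathcal B}\to 0$. The class of this extension lies in $\mathrm{Ext}^1_A\big((M/S_0)_{\mathcal B},S_0\big)$, which vanishes: indeed $\mathrm{Ext}^1_A(S',S_0)=0$ for every simple $S'\in\mathcal B$ (no edge joins $\mathcal B$ to $\mathcal A$), and a dévissage in the first variable propagates this to $(M/S_0)_{\mathcal B}$. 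Hence $N=S_0\oplus C$ with $C$ a $\mathcal B$-module; then $M/C$ is an $\mathcal A$-module (an extension of $(M/S_0)_{\mathcal A}$ by $S_0$), so $0\to C\to M\to M/C\to 0$ splits for the same reason, giving $M=D\oplus C$ with $D$ an $\mathcal A$-module; since $\varepsilon_{\mathcal A}(M)\cap\varepsilon_{\mathcal B}(M)=0$, this splitting must be $\varepsilon_{\mathcal A}(M)\oplus\varepsilon_{\mathcal B}(M)$.

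Granting the key lemma, apply it to the left regular module ${}_AA$, taking $\mathcal A$ to be the connected component of the simple module $S$. The submodule $\varepsilon_{\mathcal A}({}_AA)$ is a left ideal, and each of its right translates is a quotient of it, hence still has composition factors in $\mathcal A$ and so lies inside it; therefore $\varepsilon_{\mathcal A}({}_AA)$ is a two-sided ideal, and likewise $\varepsilon_{\mathcal B}({}_AA)$. Thus $A=\varepsilon_{\mathcal A}({}_AA)\oplus\varepsilon_{\mathcal B}({}_AA)$ is a direct product of rings, giving orthogonal central idempotents $1=e_{\mathcal A}+e_{\mathcal B}$. Tracking composition factors, $e_{\mathcal A}$ acts as the identity on every simple in $\mathcal A$ and as $0$ on every simple in $\mathcal B$; hence if $b$ is the block of $S$ then $be_{\mathcal A}\neq 0$, and primitivity of $b$ in $Z(A)$ forces $be_{\mathcal A}=b$. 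If $S\not\sim T$ then $T\notin\mathcal A$, so the block $b'$ of $T$ satisfies $b'e_{\mathcal B}=b'$, whence $bb'=(be_{\mathcal A})(b'e_{\mathcal B})=0$ and $b\neq b'$: so $S$ and $T$ lie in different blocks. This is the contrapositive of $(1)\Rightarrow(3)$, closing the cycle. The main obstacle is exactly this last implication: one must manufacture a nontrivial central idempotent of $A$ from nothing but the $\mathrm{Ext}^1$-combinatorics of simple modules, and the two points that make it work are that extensions between an $\mathcal A$-module and a $\mathcal B$-module split (so $\varepsilon_{\mathcal A}$ behaves like a direct-summand functor) and that $\varepsilon_{\mathcal A}$ of the regular module is stable under right multiplication (so the resulting idempotent is central); everything else is routine bookkeeping with composition factors.
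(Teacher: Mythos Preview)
The paper does not give its own proof of this proposition; it simply cites Webb \cite[Proposition 12.1.7]{W}. Your argument is correct and is essentially the standard textbook route to this result: show that the $\mathrm{Ext}^1$-linkage classes of simples induce a direct-sum decomposition of every finitely generated module (your ``key lemma''), apply this to ${}_AA$, observe that the summands are two-sided ideals, and conclude that the resulting central idempotents refine the block decomposition. One small point worth making explicit: when you split $0\to C\to M\to M/C\to 0$ ``for the same reason'', you are now using vanishing of $\mathrm{Ext}^1_A$ from an $\mathcal A$-module into a $\mathcal B$-module, i.e.\ the opposite direction from the previous step; this is fine precisely because your graph has an edge whenever $\mathrm{Ext}^1$ is nonzero in \emph{either} direction, so both cross-$\mathrm{Ext}^1$ groups vanish between $\mathcal A$ and $\mathcal B$.
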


\begin{proof} See \cite[Proposition 12.1.7]{W}.
\end{proof}

Let $M$ be a $kG$-module, recall that $k$-dual $M^{\ast}=\mathrm{Hom}(M, k)$ as the $kG$-module of the $k$-linear homomorphisms from
$M$ to the trivial module $k$. Then $G$-action reads
$$(gf)(m)=f(g^{-1}m)~~~~\quad\quad(for~g\in G, ~f\in M^{\ast},~ m\in M).$$

\begin{remark}Let $T$ be a simple $kG$-module in the principal block of $kG$, then
 $T^{\ast}$ is also a simple $kG$-module in the principal block of $kG$.
\end{remark}

\begin{proof} Since the trivial module $k$ is in the principal block of $kG$.
By above proposition, we have $k\sim T$. That is
either $S\cong T$ or there is a list of simple $A$-modules $k=S_1, S_2,\ldots, S_n=T$
 so that for each $i=1,\ldots, n-1,$ the modules $S_i $ and $ S_{i+1}$
appear in a non-split short exact sequence of $A$-modules
$0\to U\to V\to  W\to 0 $  with $\{U,W\}=\{S_i, S_{i+1}\}$.
So, we have a list $k^{\ast}=S_1^{\ast}, S_2^{\ast},\ldots, S_n^{\ast}=T^{\ast}$. And for each $i=1,\ldots, n-1,$ the modules $S_i $ and $ S_{i+1}$
appear in a non-split short exact sequence of $A$-modules
$0\to U\to V\to  W\to 0 $  with $\{U,W\}=\{S_i, S_{i+1}\}$. That means
$S_i^{\ast} $ and $ S_{i+1}^{\ast}$
appear in a non-split short exact sequence of $A$-modules
$0\to W^{\ast}\to V^{\ast}\to  U^{\ast}\to 0 $  with $\{U^{\ast},W^{\ast}\}=\{S_i^{\ast}, S_{i+1}^{\ast}\}$.
But $k^{\ast}\cong k$, we have $k\sim T^{\ast}$. That is $T^{\ast}$ is in the principal block of $kG$.
\end{proof}

\section{\bf Some properties of exact sequences}

In this section, we list two lemmas which will be used to prove the main result.
The Lemma 3.2 is constructed by us and the Lemma 3.1 is well-known.
\begin{lemma}Let $$\CD
 0 @> >> U @>\alpha >> V_n @> \beta  >> W @> >> 0
\endCD$$ be an exact sequence. Let
$$\CD
 0 @> >> W @>\gamma_n >> V_{n-1} @> \gamma_{n-1} >> \cdots @>\gamma_2 >> V_1 @> \gamma_1  >> T @> >> 0
\endCD$$ be a long exact sequence. Then
we have a long exact sequence
$$\CD
 0 @> >> U @>\alpha >> V_{n} @> \gamma_{n}\beta >> V_{n-1} @> \gamma_{n-1} >> \cdots  @>\gamma_2 >> V_1 @> \gamma_1  >> T @> >> 0
\endCD$$
\end{lemma}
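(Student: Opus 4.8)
The plan is to verify exactness of the proposed sequence position by position, using only that $\alpha$ is a kernel, $\beta$ is a cokernel, $\gamma_n$ is injective, and the second sequence is exact. Write $f = \gamma_n\beta \colon V_n \to V_{n-1}$. The only new arrow is $f$, so the only positions where anything needs to be checked are $U$, $V_n$, $V_{n-1}$, and (trivially) $T$; at $V_1,\dots,V_{n-2}$ the maps are literally those of the given long exact sequence, so exactness there is immediate.

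First I would record exactness at the ends: $\alpha$ is injective because $0\to U\to V_n\to W\to 0$ is exact, and $\gamma_1$ is surjective because $0\to W\to V_{n-1}\to\cdots\to V_1\to T\to 0$ is exact. Next, exactness at $V_n$: since $\gamma_n$ is injective, $\kernel(f)=\kernel(\gamma_n\beta)=\kernel(\beta)=\operatorname{im}(\alpha)$, the last equality by exactness of the first sequence at $V_n$. Then, exactness at $V_{n-1}$: since $\beta$ is surjective, $\operatorname{im}(f)=\gamma_n(\operatorname{im}\beta)=\gamma_n(W)=\operatorname{im}(\gamma_n)$, and by exactness of the second sequence at $V_{n-1}$ this equals $\kernel(\gamma_{n-1})$. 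Assembling these identities with the untouched middle of the second sequence gives the claimed long exact sequence.

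There is no real obstacle here; this is the Yoneda splicing of a short exact sequence with a long one, and the argument is a short diagram chase. The one point worth stating carefully is that injectivity of $\gamma_n$ is exactly what makes $\kernel(\gamma_n\beta)=\kernel(\beta)$, and surjectivity of $\beta$ is exactly what makes $\operatorname{im}(\gamma_n\beta)=\operatorname{im}(\gamma_n)$ — these two observations are the whole content of the lemma.
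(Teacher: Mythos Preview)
Your proof is correct and follows essentially the same approach as the paper: both reduce the lemma to checking exactness at $V_n$ and $V_{n-1}$, using that injectivity of $\gamma_n$ gives $\kernel(\gamma_n\beta)=\kernel(\beta)$ and surjectivity of $\beta$ gives $\operatorname{im}(\gamma_n\beta)=\operatorname{im}(\gamma_n)$. Your write-up is in fact slightly more complete, since you also note exactness at the endpoints and at the unchanged middle positions, which the paper leaves implicit.
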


\begin{proof} First, $\gamma_{n-1}\gamma_{n}\beta=0$, we have $\mathrm{Im}(\gamma_{n}\beta)\subseteq \mathrm{Ker}(\gamma_{n-1})$.
Let $x\in \mathrm{Ker}(\gamma_{n-1})=\mathrm{Im}(\gamma_n)$, there exists $y\in W$ such that $\gamma_n(y)=x$. Also $\beta$ is surjective,
there exists $z\in V_n$ such that $\beta(z)=y$. Hence $\gamma_n(\beta(z))=x$. That means $x\in \mathrm{Im}(\gamma_{n}\beta)$. Hence,
$\mathrm{Im}(\gamma_{n}\beta)=\mathrm{Ker}(\gamma_{n-1})$.

Second, $\gamma_{n}\beta\alpha =0$, we have $\mathrm{Im}(\alpha)\subseteq \mathrm{Ker}(\gamma_{n}\beta)$.
Let $x'\in \mathrm{Ker}(\gamma_{n}\beta)$, we have $\gamma_{n}\beta(x')=0.$ But $\gamma_n$ is injective, we have $\beta(x')=0$.
That means $x'\in \mathrm{Ker}(\beta)= \mathrm{Im}(\alpha).$ Hence, $\mathrm{Im}(\alpha)= \mathrm{Ker}(\gamma_{n}\beta)$.
\end{proof}

\begin{lemma} Let $G$ be a finite group, $p$ a prime, $k$ a field of characteristic $p$.
Let $S, S_1, S_2, T, V_0, V_1$ and $V_2$ be $kG$-modules. And there exist short exact sequences as follows:
$$\CD
 0 @> >> S_1 @>\alpha_0 >> V_0 @> \beta_0  >> S @> >> 0
\endCD$$
$$\CD
 0 @> >> S_1 @>\alpha_1 >> V_1 @> \beta_1  >> S_2 @> >> 0
\endCD$$
$$\CD
 0 @> >> T @>\alpha_2 >> V_2 @> \beta_2  >> S_1 @> >> 0.
\endCD$$
Then there exists a long exact sequence as follows:
$$\CD
 0 @> >> T @>  >> W_2 @>   >> W_1 @>    >> S @> >> 0
\endCD$$
where $W_1$ is the pushout of $(S_1, V_0, V_1)$ and $W_2$ is the pullback of $(S_2, V_1, V_2)$.
\end{lemma}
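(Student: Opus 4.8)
The plan is to realise the required four–term exact sequence as a splice of two short exact sequences, using Lemma~3.1 as the gluing tool, after first replacing the pair $V_0,V_1$ by the pushout $W_1$ and the pair $V_1,V_2$ by the pullback $W_2$.

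\emph{The pushout.} Since $\alpha_0$ and $\alpha_1$ are monomorphisms, the pushout $W_1$ of the span $V_0\xleftarrow{\alpha_0}S_1\xrightarrow{\alpha_1}V_1$ sits in a commutative square whose two new legs $\iota_0\colon V_0\to W_1$ and $\iota_1\colon V_1\to W_1$ are again monomorphisms, with cokernels read off from the first two hypotheses: $W_1/\iota_0(V_0)\cong V_1/\alpha_1(S_1)\cong S_2$ and $W_1/\iota_1(V_1)\cong V_0/\alpha_0(S_1)\cong S$. The second of these says exactly that $\beta_0$ induces an epimorphism $q\colon W_1\to S$ with $\ker q=\iota_1(V_1)$, i.e.\ that there is a short exact sequence $0\to V_1\xrightarrow{\iota_1}W_1\xrightarrow{q}S\to 0$.

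\emph{The pullback and the splice.} Dually, form the pullback $W_2$ of $(S_2,V_1,V_2)$, i.e.\ of the cospan $V_1\xrightarrow{\beta_1}S_2\longleftarrow V_2$ whose second leg is the epimorphism out of $V_2$ with kernel $T$ extracted from the third sequence; since that leg is epic, the pullback projection $p\colon W_2\to V_1$ is epic as well with $\ker p$ a copy of $T$, giving a short exact sequence $0\to T\to W_2\xrightarrow{p}V_1\to 0$ (the companion pullback sequence $0\to S_1\to W_2\to V_2\to 0$ will not be needed). Now apply Lemma~3.1 with $0\to T\to W_2\xrightarrow{p}V_1\to 0$ in the role of $0\to U\to V_n\to W\to 0$ (so $U=T$, $V_n=W_2$, $W=V_1$) and with $0\to V_1\xrightarrow{\iota_1}W_1\xrightarrow{q}S\to 0$ in the role of the long exact sequence $0\to W\to V_{n-1}\to\cdots\to T\to 0$ (so $n=2$, $V_{n-1}=W_1$, terminal term $S$); Lemma~3.1 then produces the exact sequence $0\to T\to W_2\xrightarrow{\,\iota_1 p\,}W_1\xrightarrow{q}S\to 0$, which is the asserted sequence. (Equivalently one checks exactness of this four–term sequence by hand: exactness at $W_2$ and at $S$ is immediate, and exactness at $W_1$ follows from $\iota_1$ mono, $p$ epi and $\ker q=\iota_1(V_1)$.)

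\emph{Where the work is.} The pushout/pullback bookkeeping and the splice are purely formal. The one genuinely non–formal point is the pullback step: one must correctly pin down the leg $V_2\to S_2$ of the cospan — this is where the three hypothesis sequences actually get used together — and then verify that the induced map $W_2\to V_1$ is onto with kernel $T$. That verification is a short diagram chase using that $\beta_2$ is onto and that $\alpha_1$ identifies $S_1$ with $\ker\beta_1\subseteq V_1$; it is the heart of the lemma, while everything else is routine.
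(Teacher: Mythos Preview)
Your approach is exactly the paper's: build the short exact sequence $0\to V_1\to W_1\to S\to 0$ from the pushout, the short exact sequence $0\to T\to W_2\to V_1\to 0$ from the pullback, and splice them with Lemma~3.1. The paper carries this out by explicit element-level constructions of $W_1=(V_0\oplus V_1)/U$ and $W_2=\{(y,z)\in V_1\oplus V_2:\beta_1(y)=\beta_2(z)\}$, while you invoke the standard abelian-category facts (pushout of a mono is mono with the same cokernel; pullback of an epi is epi with the same kernel); these are equivalent.

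One correction to your ``where the work is'' paragraph: there is nothing subtle about identifying the leg $V_2\to S_2$. The third short exact sequence in the lemma's statement contains a typo and should read $0\to T\xrightarrow{\alpha_2} V_2\xrightarrow{\beta_2} S_2\to 0$ (not $S_1$). This is evident from the paper's own proof---Diagram~3.1 displays $\beta_2\colon V_2\to S_2$, and the definition $W_2=\{(y,z):\beta_1(y)=\beta_2(z)\}$ only makes sense if $\beta_1$ and $\beta_2$ share the codomain $S_2$---and is confirmed by the application in the proof of Theorem~5.1, where the three sequences fed into the lemma have the third one ending at the same module as the second. With the typo fixed, the leg $V_2\to S_2$ is simply $\beta_2$, and the pullback step is just as formal as the pushout step; no extra ``pinning down'' or diagram chase beyond the standard one is needed.
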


\begin{proof} Since $$\CD
 0 @> >> S_1 @>\alpha_0 >> V_0 @> \beta_0  >> S @> >> 0
\endCD$$
$$\CD
 0 @> >> S_1 @>\alpha_1 >> V_1 @> \beta_1  >> S_2 @> >> 0
\endCD$$
$$\CD
 0 @> >> T @>\alpha_2 >> V_2 @> \beta_2  >> S_1 @> >> 0.
\endCD,$$ we have
$$\xymatrix{
  ~ & ~ & ~ & 0 \ar[d]^{~}  & ~ &  ~ &  ~ \\
~ & ~ & 0 \ar[r]^{~} & S_1\ar[d]^{\alpha_1} \ar[r]^{\alpha_0} & V_0 \ar[d]^{\tau_0} \ar[r]^{\beta_0} & S \ar[r]^{~} & 0  \\
  ~ & ~ & W_2 \ar[d]^{\sigma_2} \ar[r]^{\sigma_1} & V_1\ar[d]^{\beta_1} \ar[r]^{\tau_1}  & W_1 &  ~ &  ~ \\
0\ar[r]^{~} & T \ar[r]^{\alpha_2} & V_2  \ar[r]^{\beta_2} & S_2 \ar[d]^{~} \ar[r]^{~} & 0 & ~ & ~  \\
  ~ & ~ & ~ & 0    & ~ &  ~ &  ~
} $$
$$\mathbf{Diagram ~3.1.}$$
Here, $W_1$ is the pushout and $W_2$ is the pullback.

{\bf Construct $W_1$:} set $U=\{(\alpha_0(t), -\alpha_1(t))\in V_0\oplus V_1|t\in S_1\}$, we define $W_1:=(V_0\oplus V_1)/U.$ And
we have some morphisms as follows:
$$\tau_0: V_0\to W_1~ \mathrm{by}~\tau_0(x)=(x, 0)+U,$$
$$\tau_1: V_1\to W_1~ \mathrm{by}~\tau_1(y)=(0, y)+U.$$
And $\tau_1$ is injective because $\alpha_0$ is injective.
Set $$\beta_0': W_1\to S ~\mathrm{by}~ \beta_0'((x,y)+U)=\beta_0(x).$$
It is easy to see that $\beta_0'$ is well-defined and $\beta_0'$
is surjective. Now, we assert that $\mathrm{Ker}(\beta_0')=\mathrm{Im}(\tau_1).$ It is easy to see that $\mathrm{Ker}(\beta_0')\supseteq\mathrm{Im}(\tau_1).$
Let $(x, y)+U\in \mathrm{Ker}(\beta_0')$, we have $\beta_0(x)=0$. So there exists $t\in S_1$ such that $\alpha_0(t)=x$.
That means $(x, y)+U=(\alpha_0(t), y)+U=(0, y+\alpha_1(t))+U+(\alpha_0(t), -\alpha_1(t))+U=(0, y+\alpha_1(t))+U\in \mathrm{Im}(\tau_1)$.
Hence, $\mathrm{Ker}(\beta_0')=\mathrm{Im}(\tau_1)$. Also $\mathrm{Im}(\tau_1)\cong V_1$ because $\tau_1$ is injective.
So we have a short exact sequence as follows
$$\CD
 0 @> >> V_1 @>\tau_1 >> W_1 @> \beta'_0  >> S @> >> 0
\endCD.$$

{\bf Construct $W_2$:} we can set $W_2=\{(y, z)\in V_1\oplus V_2|\beta_1(y)=\beta_2(z)\}$.  And
we have some morphisms as follows:
$$\sigma_1: W_2\to V_1 ~\mathrm{by}~ \sigma_1(y, z)=y,$$
$$\sigma_2: W_2\to V_2 ~\mathrm{by}~ \sigma_2(y, z)=z.$$
Now, we assert that $\mathrm{Ker}(\sigma_1)\cong T$.
Since $\mathrm{Ker}(\sigma_1)=\{(y, z)\in W_2|\sigma_1(y, z)=0\}=\{(0, z)|(0, z)\in W_2\}=\{(0, z)|\beta_1(0)=\beta_2(z)\}=\{(0, z)|\beta_2(z)=0\}\cong \mathrm{Im}(\alpha_2)\cong T$.
Set $\alpha'_2: T\to W_2$ by $\alpha'_2(t)=(0,\alpha_2(t))$ for $t\in T$. So we have a short exact sequence as follows
$$\CD
 0 @> >> T @>\alpha'_2 >> W_2 @> \sigma_1  >> V_1 @> >> 0
\endCD.$$

Now, we have a long exact sequence as follows
$$\CD
 0 @> >> T @>\alpha'_2 >> W_2 @> \tau_1\sigma_1  >> W_1 @> \beta'_0  >> S @> >> 0
\endCD$$
by Lemma 3.1.
\end{proof}

\begin{remark}(1). If the exact sequences
$$\CD
 0 @> >> S_1 @>\alpha_0 >> V_0 @> \beta_0  >> S @> >> 0
\endCD$$
is non-split, then exact sequences $$\CD
 0 @> >> V_1 @>\tau_1 >> W_1 @> \beta'_0  >> S @> >> 0
\endCD$$
is non-split.

(2). If the exact sequence
$$\CD
 0 @> >> T @>\alpha_2 >> V_2 @> \beta_2  >> S_1 @> >> 0
\endCD$$
is non-split, $S_1, T$ are simple $kG$-modules and $S_1\ncong T$, then
the exact sequence
$$\CD
 0 @> >> T @>\alpha'_2 >> W_2 @> \sigma_1  >> V_1 @> >> 0
\endCD$$
is non-split.
\end{remark}

\begin{proof} (1). Suppose that the exact sequence $$\CD
 0 @> >> V_1 @>\tau_1 >> W_1 @> \beta'_0  >> S @> >> 0
\endCD$$ is split, then there exists a map $\rho$ from $W_1$ to $V_1$ such that
$$\rho\tau_1= \mathrm{Id}_{V_1}.$$

Let $t\in S_1$, we have
$$\tau_0\alpha_0(t)=(\alpha_0(t),0)+U=(0, \alpha_1(t))+(\alpha_0(t), -\alpha_1(t))+U=(0, \alpha_1(t))+U.$$
We can see that $\rho\tau_1(\alpha_1(t))=\rho((0, \alpha_1(t))+U)=\alpha_1(t)$.
So $\rho\tau_0\alpha_0(t)=\alpha_1(t)$. Set $\widetilde{\alpha}_1^{-1}$ is the inverted map of $\alpha_1: S_1\to \mathrm{Im}(\alpha_1)$.
Then we have $\widetilde{\alpha}_1^{-1}\rho\tau_0\alpha_0=\mathrm{Id}_{S_1}$. Hence, the exact sequence
$$\CD
 0 @> >> S_1 @>\alpha_0 >> V_0 @> \beta_0  >> S @> >> 0
\endCD$$
is split, that is a contradiction. Therefore, the exact sequence $$\CD
 0 @> >> V_1 @>\tau_1 >> W_1 @> \beta'_0  >> S @> >> 0
\endCD$$ is non-split.

(2). suppose that the exact sequence $$\CD
 0 @> >> T @>\alpha'_2 >> W_2 @> \sigma_1  >> V_1 @> >> 0
\endCD$$ is split, then there exists a map $\rho$ from $V_1$ to $W_2$ such that
$$\delta\alpha_2'= \mathrm{Id}_{T}.$$

Let $z\in V_2$, set a map $\varrho$ from $V_2$ to $T$ as follows:
$$\varrho: V_2\longrightarrow T$$
$$~~~~z\longmapsto \delta(y, z)$$
for some $y\in V_1$ such that $(y, z)\in W_2$.
Assume that $y, y'\in V_1$ such that $(y, z), (y',z)\in W_2$. So $(y-y', 0)\in W_2$, that means
$y-y'\in \mathrm{Ker}(\beta_1)=\mathrm{Im}(\alpha_1)\cong S_1$.
If $\delta(y-y', 0)\neq 0$, that is a contradiction because $S_1, T$ are simple $kG$-modules and $S_1\ncong T$.
Hence $\delta(y-y', 0)=0.$
So, $\varrho$ is well-defined.
Let $t\in T$, we can see that $\varrho\alpha_2(t)=\delta(0, \alpha_2(t))$ because $\beta_2\alpha_2(t)=0$.
Hence, we have $\varrho\alpha_2(t)=\delta(0, \alpha_2(t))=\delta\alpha_2'(t)=t$. Therefore, we have
$\varrho\alpha_2= \mathrm{Id}_{T}$. That means
 the exact sequence
$$\CD
 0 @> >> S_1 @>\alpha_0 >> V_0 @> \beta_0  >> S @> >> 0
\endCD$$
is split, that is a contradiction.
Therefore, the exact sequence $$\CD
 0 @> >> T @>\alpha'_2 >> W_2 @> \sigma_1  >> V_1 @> >> 0
\endCD$$
is also non-split.
\end{proof}

\section{\bf Some properties of Ext functor}

In this section we collect some known results about Ext functor, we refer to \cite{C, CR, HS, Wei}.

Let $U, W$ be $kG$-modules, and $n$ a non-negative integer. An exact sequence
$$E: 0\to U\to B_{n-1}\to \cdots\to B_0\to W\to 0$$
of $kG$-modules is called an $n$-$extension$ of $W$ by $U$.
Recall that $n$-extensions $E, E'$ satisfy the relation $E\rightarrow E'$ if there is a commutative diagram
$$\CD
E:~~~0 @>  >> U @>  >> B_{n-1} @>   >> \cdots @>   >> B_0 @>   >> W @> >> 0 \\
       @V   VV @V   VV @V  \mu_{n-1} VV @V    VV @V \mu_0 VV @V = VV   \\
E':~~~0 @>  >> U @> >> C_{n-1} @> >> \cdots  @>   >> C_0 @>    >> W @> >> 0.
\endCD $$
It is easy to see that the relation $\rightarrow$ is not symmetric for $n\geq 2$. But we can define $E$ and
$E'$ to be equivalent, $E\sim E'$, if and only if there exists a chains $E=E_0, E_1,\ldots, E_k=E'$ with
$$E_0\rightarrow E_1\leftarrow E_2\rightarrow\cdots\leftarrow E_k.$$
By $[E]$ we denote the equivalence class of the $n$-extension $E$.
Let $\mathcal{U}^n(W, U)$ be the class of all $n$-extension of $W$ by $U$.

\begin{theorem}\cite[Theorem 6.3]{C} \cite[Theorem 9.1]{HS} Let $U, W$ be $kG$-modules, and $n$ a non-negative integer.
Then there is a bijection $\Phi: $
$$\mathrm{Ext}^n_{kG}(W, U)\cong\mathcal{U}^n(W, U)/\sim.$$
\end{theorem}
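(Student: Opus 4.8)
The plan is to realize the bijection $\Phi$ concretely from a chosen projective resolution via the pushout construction, and to produce an explicit inverse using the lifting property of projective modules. We may assume $n\ge 1$, the case $n=0$ being the defining identification $\mathrm{Ext}^0_{kG}(W,U)=\mathrm{Hom}_{kG}(W,U)$. Fix a projective resolution
$$\cdots\to P_2\xrightarrow{\partial_2}P_1\xrightarrow{\partial_1}P_0\xrightarrow{\varepsilon}W\to 0$$
of $W$ over $kG$, and recall that $\mathrm{Ext}^n_{kG}(W,U)=Z^n/B^n$, where $Z^n=\{f\in\mathrm{Hom}_{kG}(P_n,U):f\partial_{n+1}=0\}$ and $B^n=\{g\partial_n:g\in\mathrm{Hom}_{kG}(P_{n-1},U)\}$. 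For $j\ge 1$ write $\Omega^j W:=\mathrm{Im}(\partial_j)=\mathrm{Ker}(\partial_{j-1})\subseteq P_{j-1}$ for the $j$-th syzygy, so that $\partial_j$ induces a surjection $P_j\twoheadrightarrow\Omega^j W$ sitting in a short exact sequence $0\to\Omega^{j+1}W\to P_j\to\Omega^j W\to 0$ (with $\Omega^1W=\mathrm{Ker}\,\varepsilon$ and $\Omega^0 W=W$).

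\textbf{Definition of $\Phi$.} Let $f\in Z^n$. Since $f$ vanishes on $\mathrm{Im}(\partial_{n+1})=\Omega^{n+1}W=\mathrm{Ker}(\partial_n)$, it factors as $P_n\twoheadrightarrow\Omega^n W\xrightarrow{\bar f}U$. Form the pushout $Q$ of the inclusion $\Omega^n W\hookrightarrow P_{n-1}$ along $\bar f$; this yields a short exact sequence $0\to U\to Q\to \Omega^{n-1}W\to 0$, which for $n\ge 2$ we splice (using Lemma 3.1, or an iterate of it) with the exact sequence $0\to\Omega^{n-1}W\to P_{n-2}\to\cdots\to P_0\to W\to 0$ to get an $n$-extension
$$E_f:\ \ 0\to U\to Q\to P_{n-2}\to\cdots\to P_0\to W\to 0$$
(when $n=1$ the last sequence is empty and $E_f$ is simply $0\to U\to Q\to W\to 0$). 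Set $\Phi([f]):=[E_f]$. The first point to check is well-definedness: if $f'=f+g\partial_n$ with $g\in\mathrm{Hom}_{kG}(P_{n-1},U)$, then $\bar{f'}=\bar f+g|_{\Omega^n W}$, and because $g|_{\Omega^n W}$ extends to all of $P_{n-1}$ the pushouts along $\bar f$ and $\bar{f'}$ are isomorphic as extensions of $\Omega^{n-1}W$ by $U$; hence $E_f\sim E_{f'}$. This is the one genuinely computational step, carried out by a direct diagram chase on the defining pushout squares.

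\textbf{The inverse.} Given an $n$-extension $E:0\to U\to B_{n-1}\to\cdots\to B_0\to W\to 0$, projectivity of the $P_j$ lets us lift $\mathrm{id}_W$ to a morphism of complexes: there are $kG$-maps $\phi_j:P_j\to B_j$ for $0\le j\le n-1$ and $\phi_n:P_n\to U$ commuting with all differentials, and then $\phi_n\partial_{n+1}=0$, i.e. $\phi_n\in Z^n$. Put $\Psi([E]):=[\phi_n]$. Any two lifts of $\mathrm{id}_W$ differ by a chain homotopy, so $[\phi_n]$ is independent of the chosen lift. Moreover, if there is a morphism of $n$-extensions $E\to E'$, composing a lift of $\mathrm{id}_W$ along $E$ with it gives a lift along $E'$ with the same degree-$n$ component; hence $E\to E'$ implies $\Psi([E])=\Psi([E'])$, so $\Psi$ is constant on each $\sim$-class. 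It is exactly this observation that forces the zigzag closure of $\to$ to be the right equivalence relation on the extension side.

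\textbf{Mutual inverseness and the main obstacle.} For $\Psi\circ\Phi=\mathrm{id}$: in the construction of $E_f$ the maps $P_j\to P_j$ for $j\le n-2$, the pushout structure map $P_{n-1}\to Q$, and $f:P_n\to U$ form a lift of $\mathrm{id}_W$ along $E_f$, so $\Psi([E_f])=[f]$. For $\Phi\circ\Psi=\mathrm{id}$: given $E$ with chosen lift $(\phi_j)$ and cocycle $\phi_n$, the universal property of the pushout used to build $E_{\phi_n}$ produces a morphism of $n$-extensions $E_{\phi_n}\to E$, whence $[E_{\phi_n}]=[E]$ in $\mathcal{U}^n(W,U)/\sim$. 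This establishes that $\Phi$ is a bijection. I expect the main obstacle to be bookkeeping rather than ideas: verifying that $\Phi$ is well defined modulo coboundaries, that $\Psi$ is well defined modulo chain homotopy and constant on zigzags, and that the two composites are identities, each reduces to a diagram chase using only the pushout construction and the lifting property of projectives, with no new input beyond those two tools.
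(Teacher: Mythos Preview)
Your argument is correct and is precisely the classical Yoneda description of $\mathrm{Ext}$: build $\Phi$ by pushing out the $n$th syzygy along a cocycle and splicing with the tail of the resolution, build $\Psi$ by lifting $\mathrm{id}_W$ through a given extension using projectivity, and verify the two are mutually inverse via the comparison theorem and the universal property of the pushout. Each of the verifications you flag (well-definedness of $\Phi$ modulo coboundaries, of $\Psi$ modulo homotopy and under zigzags, and the two composites) goes through exactly as you indicate.

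As for comparison with the paper: the paper does not prove this theorem at all. Its entire proof reads ``See \cite[Theorem 6.3]{C}.'' The result is quoted from the literature (Carlson; Hilton--Stammbach) and used as a black box later in Section~5. So you have not merely matched the paper's approach---you have supplied the standard proof that the paper outsources. What you wrote is essentially what one finds in the cited references, so in that sense there is no divergence in method, only in whether the argument is spelled out.
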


\begin{proof} See \cite[Theorem 6.3]{C}.
\end{proof}

\begin{lemma}\cite[Lemma 3.4.1]{Wei} Let $U, W, V$ be $kG$-modules. Let $0\to U\to V\to  W\to 0 $
be an exact sequence.
If $\mathrm{Ext}^1_{kG}(W, U)=0$, then the exact sequence $0\to U\to V\to  W\to 0 $
is split.
\end{lemma}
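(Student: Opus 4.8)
The plan is to extract the splitting directly from the classification of extensions recorded in Theorem 4.1. The given sequence
$$E:\quad 0\to U\to V\to W\to 0$$
is a $1$-extension of $W$ by $U$, hence determines a class $[E]\in\mathcal{U}^1(W,U)/\!\sim$, and via the bijection $\Phi$ of Theorem 4.1 a corresponding element $\xi\in\mathrm{Ext}^1_{kG}(W,U)$. Since $\mathrm{Ext}^1_{kG}(W,U)=0$ by hypothesis, $\xi=0$; as $\Phi$ sends the class of the split extension $E_0:\ 0\to U\to U\oplus W\to W\to 0$ to $0$ and is injective, we conclude $[E]=[E_0]$, i.e.\ $E\sim E_0$.

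It then remains to turn $E\sim E_0$ into an honest splitting of $E$. I would first note that for $n=1$ the relation $\to$ between $1$-extensions of $W$ by $U$ is in fact symmetric: in a commutative diagram
$$\CD
 0 @>>> U @>>> V @>>> W @>>> 0 \\
 @. @| @VVV @| @. \\
 0 @>>> U @>>> V' @>>> W @>>> 0
\endCD$$
the middle vertical arrow is automatically an isomorphism by the short five lemma, so $E\to E'$ already forces $E\cong E'$ as extensions. Hence the zig-zag chain witnessing $E\sim E_0$ collapses to a single isomorphism of extensions $E\cong E_0$; since the split extension stays split under such an isomorphism, $E$ splits, which is the claim.

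A shorter alternative, if one is willing to invoke the long exact sequence of $\mathrm{Ext}$ (entirely standard, though not recalled in the excerpt), is to apply $\mathrm{Hom}_{kG}(W,-)$ to $E$, obtaining the exact piece
$$\mathrm{Hom}_{kG}(W,V)\xrightarrow{\pi_{\ast}}\mathrm{Hom}_{kG}(W,W)\longrightarrow \mathrm{Ext}^1_{kG}(W,U)=0,$$
so that $\pi_{\ast}$ is surjective and some $s\colon W\to V$ satisfies $\pi\circ s=\mathrm{id}_W$, which is a section of $E$. The only point needing any care is purely formal: in the first approach, the $n=1$ collapse of the equivalence relation (this is where the short five lemma enters); in the second, the construction of the connecting homomorphism. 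There is no genuinely hard step here, the statement being a standard fact of homological algebra.
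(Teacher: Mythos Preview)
Your proposal is correct. Your second (``shorter alternative'') argument is essentially the paper's own proof, only dualised: the paper applies the contravariant functor $\mathrm{Hom}_{kG}(-,U)$ to the sequence, obtains
\[
\mathrm{Hom}_{kG}(V,U)\xrightarrow{\;\circ\alpha\;}\mathrm{Hom}_{kG}(U,U)\longrightarrow \mathrm{Ext}^1_{kG}(W,U)=0,
\]
and reads off a retraction $\phi\colon V\to U$ with $\phi\circ\alpha=\mathrm{Id}_U$, whereas you apply $\mathrm{Hom}_{kG}(W,-)$ and produce a section. Either gives the splitting, and these are the same argument up to left/right symmetry.

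Your primary approach via Theorem~4.1 is a genuinely different route. It avoids the long exact $\mathrm{Ext}$ sequence entirely, trading it for the Yoneda description of $\mathrm{Ext}^1$ already recalled in the paper together with the short five lemma. This has the virtue of using exactly the machinery set up in Section~4, and your observation that the zig-zag relation collapses to isomorphism when $n=1$ is the right way to close the gap between ``equivalent to the split extension'' and ``split''. The paper's approach is more direct and requires no such analysis of the equivalence relation, but does appeal to a long exact sequence that is cited rather than developed in the text. Both are perfectly standard; neither carries any real difficulty.
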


\begin{proof} Set the exact sequence as follows:
$$\CD
 0 @> >> U @>\alpha >> V @> \beta  >> W @> >> 0
\endCD$$

By \cite[Theorem 8.6]{CR}, we have a long exact sequence as follows:
$$\CD
 0 @> >> \mathrm{Hom}_{kG}(W, U) @>\circ\beta>> \mathrm{Hom}_{kG}(V, U)  @> \circ\alpha  >>
\endCD$$ $$
\CD
\mathrm{Hom}_{kG}(U, U)  @> >> \mathrm{Ext}^1_{kG}(W, U)  @> >> \cdots
\endCD
$$
Since $\mathrm{Ext}^1_{kG}(W, U)=0$,
we have an exact sequence as follow:
$$\CD
 0 @> >> \mathrm{Hom}_{kG}(W, U) @>\circ\beta>> \mathrm{Hom}_{kG}(V, U)  @> \circ\alpha  >> \mathrm{Hom}_{kG}(U, U)  @> >>  0
\endCD$$
That means $\mathrm{Id}_U= \phi \circ\alpha$ for some $\phi\in \mathrm{Hom}_{kG}(V, U)$. Hence,
$0\to U\to V\to  W\to 0 $
is split.
\end{proof}

%

\begin{lemma}\cite[Lemma 5.2.3]{Ben1} Suppose $k$ is a field of characteristic $p$ an $M$ is a finitely
generated $kG$-module. Then the following are equivalent:

(i) $M$ is projective.

(ii) $\mathrm{Ext}^n_{kG}(M, M)=0$ for all $n> 0$.

(iii) $\mathrm{Ext}^n_{kG}(M, M)=0$ for all $n$ large enough.

(iv) Every element of $\mathrm{Ext}^{\ast}_{kG}(M, M)$ of positive degree is nilpotent.
\end{lemma}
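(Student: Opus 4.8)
The plan is to prove the four conditions equivalent through the cycle (i) $\Rightarrow$ (ii) $\Rightarrow$ (iii) $\Rightarrow$ (iv) $\Rightarrow$ (i), with essentially all of the content sitting in the final implication. The first three arrows are formal. For (i) $\Rightarrow$ (ii), a projective module serves as its own projective resolution, so $\mathrm{Ext}^n_{kG}(M,-)$ vanishes for every $n>0$; in particular $\mathrm{Ext}^n_{kG}(M,M)=0$ for all $n>0$. The arrow (ii) $\Rightarrow$ (iii) is trivial. For (iii) $\Rightarrow$ (iv), suppose $\mathrm{Ext}^n_{kG}(M,M)=0$ for all $n\geq N$; then for a homogeneous element $\zeta$ of degree $d\geq 1$ the power $\zeta^m$ lies in $\mathrm{Ext}^{md}_{kG}(M,M)$, which is zero as soon as $md\geq N$, so $\zeta$ is nilpotent. (In fact the whole augmentation ideal $I=\bigoplus_{n\geq 1}\mathrm{Ext}^n_{kG}(M,M)$ satisfies $I^N=0$.)

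For the substantive implication (iv) $\Rightarrow$ (i) I would invoke the finite-generation theory of group cohomology. By the Evens--Venkov theorem the ring $H^*(G,k)=\mathrm{Ext}^*_{kG}(k,k)$ is a finitely generated graded-commutative $k$-algebra, hence Noetherian, and $\mathrm{Ext}^*_{kG}(M,M)$ is a finitely generated module over it. The action is induced by the graded ring homomorphism $\varphi\colon H^*(G,k)\to \mathrm{Ext}^*_{kG}(M,M)$ arising from the exact functor $-\otimes_k M$, which carries $\zeta$ to $\zeta\otimes 1_M$. The first step is to note that hypothesis (iv) makes $\varphi(\zeta)$ nilpotent for every $\zeta$ in the augmentation ideal $H^{\geq1}(G,k)$; since module multiplication by $\zeta$ on $\mathrm{Ext}^*_{kG}(M,M)$ is multiplication by $\varphi(\zeta)$, a suitable power of each such $\zeta$ annihilates $\mathrm{Ext}^*_{kG}(M,M)$. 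As $H^{\geq1}(G,k)$ is finitely generated, a single power $\bigl(H^{\geq1}(G,k)\bigr)^{t}$ lands in $\mathrm{Ann}_{H^*(G,k)}\bigl(\mathrm{Ext}^*_{kG}(M,M)\bigr)$.

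The second step is to read off the conclusion. The displayed containment says that the support variety $V_G(M)$, the maximal-ideal spectrum of $H^*(G,k)/\mathrm{Ann}_{H^*(G,k)}(\mathrm{Ext}^*_{kG}(M,M))$, is supported only at the irrelevant ideal, i.e. is a single point; equivalently $\mathrm{Ext}^*_{kG}(M,M)$ is finite-dimensional over $k$ and so $\mathrm{Ext}^n_{kG}(M,M)=0$ for $n\gg0$. I would then finish with Carlson's projectivity criterion: a finitely generated $kG$-module is projective if and only if its support variety is trivial. Applying this to $M$ gives (i) and closes the cycle.

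The main obstacle is exactly this last step: deducing projectivity from the nilpotence/vanishing data. Pure manipulation inside the graded ring only yields that $\mathrm{Ext}^n_{kG}(M,M)$ eventually vanishes, and converting ``eventual vanishing of self-extensions'' into ``$M$ is projective'' genuinely requires the finite generation of group cohomology (Evens--Venkov) together with Carlson's theorem on varieties for modules; it cannot be reached by the elementary exact-sequence arguments of Section 3 alone. A harmless simplification, should one prefer to avoid the general form of Carlson's theorem, is to first reduce to $M$ indecomposable by writing $M=\bigoplus_i M_i$ and noting that (iv) passes to each corner ring $\mathrm{Ext}^*_{kG}(M_i,M_i)$, after which projectivity of every summand gives projectivity of $M$.
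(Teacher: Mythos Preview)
The paper does not supply its own proof of this lemma: it is quoted verbatim as \cite[Lemma 5.2.3]{Ben1} and used as a black box in Section~5, with no argument given. Your proposal is a correct sketch of the standard proof (essentially the one in Benson's book): the cycle (i)$\Rightarrow$(ii)$\Rightarrow$(iii)$\Rightarrow$(iv) is formal, and the real content (iv)$\Rightarrow$(i) is handled via Evens--Venkov finite generation and Carlson's projectivity criterion in terms of support varieties. So there is nothing to compare against in the paper itself, and your approach is the expected one.
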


\section{\bf A proof of the Main Theorem}

In this section, we give a proof of the main theorem.
\begin{theorem}  Let $G$ be a finite group, $p$ a prime, $k$ a field of characteristic $p$.
If $T$ is a simple $kG$-modules in the principal block of $kG$,
then there is a list of $kG$-modules $k=V_1, V_2,\ldots, V_n=T$
such that for each $i=1,\ldots, n-1,$ the modules $V_i $ and $ V_{i+1}$
satisfy that
$$\mathrm{Ext}_{kG}^{f(i)}(V_{i}, V_{i+1})\neq 0$$
where $f(i)$ are positive integers for $i=1,2,\ldots, n-1$.
\end{theorem}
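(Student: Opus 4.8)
The plan is to proceed by induction on the length of the chain connecting $k$ to $T$ furnished by Proposition 2.3, part (3), together with Definition 2.2. By Proposition 2.3 there is a list of simple $kG$-modules $k = S_1, S_2, \ldots, S_m = T$ in the principal block such that for each $j$ the pair $\{S_j, S_{j+1}\}$ sits in a non-split short exact sequence $0 \to U \to V \to W \to 0$ with $\{U,W\} = \{S_j, S_{j+1}\}$. So it suffices to treat a single such link and then concatenate. The key reduction step is: given simple modules $S, S'$ in the principal block joined by a non-split short exact sequence, I want to produce positive integers and a (possibly longer) chain of modules from $S$ to $S'$ each consecutive pair having nonzero $\mathrm{Ext}$ in some positive degree. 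If the non-split sequence is $0 \to S \to V \to S' \to 0$, then $\mathrm{Ext}^1_{kG}(S', S) \neq 0$ by Lemma 4.3, and we are done with $f = 1$ and the two-term chain $S, S'$. The genuinely problematic case is the other orientation, $0 \to S' \to V \to S \to 0$, which gives $\mathrm{Ext}^1_{kG}(S, S') \neq 0$ but we need a chain \emph{from} $S$ \emph{to} $S'$ in the prescribed direction, i.e. ending at $S'$, or rather we need to be able to glue links consistently.

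The way to handle orientation is to use the $k$-dual together with Remark 2.5: both $S$ and $S^\ast$ lie in the principal block, and dualizing a non-split extension reverses its direction. Concretely, if $0 \to S' \to V \to S \to 0$ is non-split then $0 \to S^\ast \to V^\ast \to S'^\ast \to 0$ is non-split, and more usefully one can pass between $\mathrm{Ext}^n_{kG}(A,B)$ and $\mathrm{Ext}^n_{kG}(B^\ast, A^\ast)$. I would insert dual modules into the chain at the cost of extra links: the extra link from $S$ to $S^\ast$ (or between a module and its dual) must itself be shown to have nonzero $\mathrm{Ext}$ in some positive degree, and here Lemma 4.4 is the tool — since $S$ is a non-projective simple module in the principal block (the principal block has non-trivial defect as $p \mid |G|$; if $p \nmid |G|$ the theorem is trivial with $n = 1$... actually $k = T$ then), $\mathrm{Ext}^n_{kG}(S, S) \neq 0$ for some $n > 0$, and combining with the identity $\mathrm{Ext}^n_{kG}(S,S) \cong \mathrm{Ext}^n_{kG}(S^\ast, S^\ast)$ and a cup-product / composition argument one links $S$ to $S^\ast$. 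Alternatively, and more in the spirit of Section 3, one uses Lemma 3.2 and Remark 3.3 to splice the given short exact sequences into a longer exact sequence $0 \to T \to W_2 \to W_1 \to S \to 0$, which by Theorem 4.1 represents a nonzero class in $\mathrm{Ext}^2$ — but this only directly bridges $S$ to $T$ and one still needs to interpolate the intermediate modules $W_1, W_2$ into the chain with positive-degree $\mathrm{Ext}$ between consecutive terms, which is exactly what the non-splitness statements in Remark 3.3 are designed to guarantee.

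So the concrete order of steps I would carry out is: (1) reduce to the case $p \mid |G|$ and fix the simple-module chain $k = S_1, \ldots, S_m = T$ from Proposition 2.3; (2) for each link with the ``good'' orientation $0 \to S_j \to V \to S_{j+1} \to 0$ record $\mathrm{Ext}^1_{kG}(S_{j+1}, S_j) \neq 0$ via Lemma 4.3; (3) for each link with the ``bad'' orientation, apply Lemma 3.2 / Remark 3.3 — taking $S = S_{j+1}$, $T = S_j$, and choosing the auxiliary sequences to be (copies of) the given extension and projective covers — to replace the single bad link by a short chain $S_{j+1}, W_2, W_1, S_j$ or similar, in which every consecutive pair has nonzero $\mathrm{Ext}$ in degree $1$ or $2$ because the relevant sequences are non-split by Remark 3.3; (4) concatenate all the resulting sub-chains into one list $k = V_1, \ldots, V_n = T$ and read off the function $f$. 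The main obstacle is step (3): getting the orientations to match up so that the pieces actually concatenate into a directed chain from $k$ to $T$, and verifying that the non-splitness hypotheses of Remark 3.3 (in particular the condition $S_1 \ncong T$ in part (2)) are met — when they fail, one has $S_j \cong S_{j+1}$ and the link can simply be deleted, but this case distinction needs care. A secondary subtlety is ensuring the modules in the chain, while not required to be simple, are genuinely distinct enough that each $\mathrm{Ext}^{f(i)}$ is nonzero rather than vacuously zero; the non-splitness arguments of Section 3 combined with Lemma 4.3 are precisely what rules this out.
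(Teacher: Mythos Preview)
Your proposal has the right toolkit but contains an orientation error and then does not correctly assemble the pieces for the hard case.

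First, the orientations are reversed throughout. For the chain $k=V_1,\ldots,V_n=T$ the theorem requires $\mathrm{Ext}^{f(i)}(V_i,V_{i+1})\neq 0$. A non-split sequence $0\to S'\to V\to S\to 0$ gives $\mathrm{Ext}^1(S,S')\neq 0$, which \emph{is} exactly the link $S,S'$ you want; this is the easy case, not the problematic one. Conversely $0\to S\to V\to S'\to 0$ yields only $\mathrm{Ext}^1(S',S)\neq 0$, which is the wrong direction --- that is the genuinely bad case. Your step~(2) therefore records the wrong $\mathrm{Ext}$ group, and your step~(3) is attacking the case that needed no work.

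Second, neither of your fixes for the actual bad case goes through as written. The duality route needs $\mathrm{Ext}^n(S,S^\ast)\neq 0$ for some $n>0$ to insert the dual into the chain; the non-projectivity lemma only gives $\mathrm{Ext}^n(S,S)\neq 0$, and there is no general way to convert this unless $S\cong S^\ast$. (The paper does use duality, but only once at the very start to arrange $\mathrm{Ext}^1(k,S_1)\neq 0$, exploiting $k\cong k^\ast$.) Your Lemma~3.2 route is closer, but ``copies of the given extension and projective covers'' does not supply three sequences of the required matching shape, and pieces of a projective resolution do not represent nonzero $\mathrm{Ext}$ classes, so Remark~3.3 would not fire. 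Treating each link independently also cannot work: Lemma~3.2 needs a sequence with $S_{n-1}$ on the \emph{left} and another with $T$ on the \emph{right}, and neither is available from the single bad link alone.

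The paper's mechanism is inductive and modifies the already-built chain. Assuming a chain up to $V_r=S_{n-1}$, it peels the first short piece $0\to S_{n-1}\to U_{t-1}\to \mathrm{Im}(\gamma_{t-1})\to 0$ off the \emph{last existing link} $\mathrm{Ext}^{t}(V_{r-1},S_{n-1})$; independently it uses non-projectivity of $T$ to choose a nonzero class in $\mathrm{Ext}^s(T,T)$ and peels off its last short piece $0\to \mathrm{Ker}(\kappa_0)\to M_0\to T\to 0$. These two, together with the bad extension $0\to S_{n-1}\to L\to T\to 0$, are precisely the three inputs Lemma~3.2 needs. The output replaces the terminal module $S_{n-1}$ by $\mathrm{Im}(\gamma_{t-1})$ (shortening the previous link by one degree) and appends $L,\ \mathrm{Ker}(\kappa_0),\ T$, with the four new degrees $t{-}1,\,1,\,1,\,s{-}1$. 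You mention both ingredients --- the non-projectivity lemma and Lemma~3.2 --- but never in combination, and the crucial step of feeding the inductive link back into Lemma~3.2 is absent from your plan.
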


\begin{proof} {\bf Step 1.} Since $k, T$ are simple $kG$-modules in the same block of $kG$, there is a list of simple $A$-modules $k=S_0, S_1,\ldots, S_m=T$
 so that for each $i=0,\ldots, m-1,$ the modules $S_i $ and $ S_{i+1}$
appear in a non-split short exact sequence of $A$-modules
$0\to U\to V\to  W\to 0 $  with $\{U,W\}=\{S_i, S_{i+1}\}$ by Proposition 2.2.
By Lemma 4.2, we have $\mathrm{Ext}_{kG}^1(k, S_1)\neq 0$ or $\mathrm{Ext}_{kG}^1(S_1, k)\neq 0$.

If $\mathrm{Ext}_{kG}^1(S_1, k)\neq 0$, we know that $\mathrm{Ext}_{kG}^1(k^{\ast}, S_1^{\ast})\cong\mathrm{Ext}_{kG}^1(S_1, k)\neq 0$.
Since $S^{\ast}_1$ also belongs to the principal block of $kG$,  there is a list of simple $A$-modules $S^{\ast}_1=S_0', S_1',\ldots, S_{m'}'=T$
 so that for each $i=0,\ldots, m'-1,$ the modules $S_i' $ and $ S_{i+1}'$
appear in a non-split short exact sequence of $A$-modules
$0\to U\to V\to  W\to 0 $  with $\{U,W\}=\{S_i', S_{i+1}'\}$ by Proposition 2.2.

So we always choose a list of simple $A$-modules $k:=S:=S_0, S_1,\ldots, S_n=T$
 so that for each $i=0,\ldots, n-1,$ the modules $S_i $ and $ S_{i+1}$
appear in a non-split short exact sequence of $A$-modules
$0\to U\to L\to  W\to 0 $  with $\{U,W\}=\{S_i, S_{i+1}\}$ and
$$\mathrm{Ext}_{kG}^1(k, S_1)\neq 0.$$

{\bf Step 2.} Induction on $n$.

{\bf Step 2.1.} If $n=1$, by our choice, we have $\mathrm{Ext}_{kG}^1(k, S_1)\neq 0$.

{\bf Step 2.2.}
Now, suppose that the theorem holds for $n-1$. That means there is a list of $kG$-modules $k=V_1, V_2,\ldots, V_{r}=S_{n-1}$
such that for each $i=1,\ldots, n-1,$ the modules $V_i $ and $ V_{i+1}$
satisfy that
$$\mathrm{Ext}_{kG}^{f(i)}(V_{i}, V_{i+1})\neq 0$$
where $f(i)$ are positive integers for $i=1,2,\ldots, r-1$.

We know that the modules $S_{n-1}$ and $ S_{n}=T$
appear in a non-split short exact sequence of $A$-modules
$0\to U\to V\to  W\to 0 $  with $\{U,W\}=\{S_{n-1}, S_{n}\}$. So, we will discuss two cases as follows:

{\bf Case 1:} $T$ and $S_{n-1}$ are in the following non-split exact sequence: $$\CD
 0 @> >> T @>\alpha_1 >> L @> \beta_1  >> S_{n-1} @> >> 0.
\endCD$$
That means $$\mathrm{Ext}_{kG}^{1}(S_{n-1}, T)\neq 0.$$
Set $V_{r+1}=T$ and $f(r)=1$, we have
$$\mathrm{Ext}_{kG}^{f(i)}(V_{i}, V_{i+1})\neq 0$$
where $f(i)$ are positive integers for $i=1,2,\ldots, r$.
So we complete the Case 1.

{\bf Case 2:} $T$ and $S_{n-1}$ are in the following non-split exact sequence: $$\CD
 0 @> >> S_{n-1} @>\alpha_1 >> L @> \beta_1  >> T @> >> 0.
\endCD,$$

Since $T$ is not projective, we have $\mathrm{Ext}^{s}_{kG}(T, T)\neq0$ for some positive integer $s$ by Lemma 4.3. By Theorem 4.1, we have the following exact sequence: $$\CD
 0 @> >> T @> \kappa_s >> M_{s-1} @> \kappa_{s-1}  >> \cdots @> \kappa_1 >> M_0 @> \kappa_0 >> T @> >> 0
\endCD $$
$$\mathbf{Diagram ~5.1.}$$
who's image is not 0 under the map $\Phi$.

{\bf Case 2.1.} $\mathrm{Ker}(\kappa_0) \cong S_{n-1}$. We can see that the exact sequence: $$\CD
 0 @> >> T @> \kappa_s >> M_{s-1} @> \kappa_{s-1}  >> \cdots @> \kappa_2 >> M_1 @> \kappa_1 >> \mathrm{Im}(\kappa_1)(\cong\mathrm{Ker}(\kappa_0) \cong S_{n-1}) @> >> 0
\endCD $$ who's image is not 0 under the map $\Phi$.
Else, the exact sequence: $$\CD
 0 @> >> T @> \kappa_s >> M_{s-1} @> \kappa_{s-1}  >> \cdots @> \kappa_1 >> M_0 @> \kappa_0 >> T @> >> 0
\endCD $$
who's image is 0 under the map $\Phi$.
That means $\mathrm{Ext}^{s-1}_{kG}(S_{n-1}, T)\neq 0$.
Set $V_{r+1}=T$ and $f(r)=s-1$, we have
$$\mathrm{Ext}_{kG}^{f(i)}(V_{i}, V_{i+1})\neq 0$$
where $f(i)$ are positive integers for $i=1,2,\ldots, r$.
So we complete the Case 2.1.

{\bf Case 2.2.} $\mathrm{Ker}(\kappa_0) \ncong S_{n-1}$.

Since $\mathrm{Ext}_{kG}^{f(r-1)}(V_{r-1}, V_{r})\neq 0$, set $t:=f(r-1)$, we have
 the following exact sequence: $$\CD
 0 @> >> V_r(=S_{n-1}) @> \gamma_{t} >> U_{t-1} @> \gamma_{t-1}  >> \cdots @> \gamma_1 >> U_0 @> \gamma_0 >> V_{r-1} @> >> 0
\endCD $$
$$\mathbf{Diagram ~5.2.}$$

So we have the following:
$$\CD
 0 @> >> S_{n-1} @> \gamma_t >> U_{t-1} @> \gamma_{t-1}  >> \mathrm{Im}(\gamma_{t-1})  @>  >> 0,
\endCD$$
$$\CD
 0 @> >> S_{n-1} @>\alpha_1 >> L @> \beta_1  >> T @> >> 0,
\endCD$$
and
$$\CD
0 @>  >> \mathrm{Ker}(\kappa_0) @> l >> M_0 @> \kappa_0 >> T @> >> 0
\endCD$$
where $l$ is an inclusion.
By Lemma 3.2,
we have
$$\xymatrix{
  ~ & ~ & ~ & 0 \ar[d]^{~}  & ~ &  ~ &  ~ \\
~ & ~ & 0 \ar[r]^{~} & S_{n-1}\ar[d]^{\alpha_1} \ar[r]^{\gamma_{t}} & U_{t-1} \ar[d]^{\tau_0} \ar[r]^{\gamma_{t-1}} & \mathrm{Im}(\gamma_{t-1}) \ar[r]^{~} & 0  \\
  ~ & ~ & W_2 \ar[d]^{\sigma_2} \ar[r]^{\sigma_1} & L\ar[d]^{\beta_1} \ar[r]^{\tau_1}  & W_1 &  ~ &  ~ \\
0\ar[r]^{~} & \mathrm{Ker}(\kappa_0) \ar[r]^{l} & M_0  \ar[r]^{\kappa_0} & T \ar[d]^{~} \ar[r]^{~} & 0 & ~ & ~  \\
  ~ & ~ & ~ & 0    & ~ &  ~ &  ~
} $$
$$\mathbf{Diagram ~5.3.}$$
where $W_1$ is the pushout and $W_2$ is the pullback which construct like Lemma 3.2.
That is, set $U=\{(\gamma_t(a), -\alpha_1(a))\in U_{t-1}\oplus L|a\in S_{n-1}\}$, we define $W_1:=(U_{t-1}\oplus L)/U.$ And
we have some morphisms as follows:
$$\tau_0: U_{t-1}\to W_1~ \mathrm{by}~\tau_0(x)=(x, 0)+U,$$
$$\tau_1: L\to W_1~ \mathrm{by}~\tau_1(y)=(0, y)+U$$
where $x\in U_{t-1}, y\in L.$
Set $$\gamma_{t-1}': W_1\to \mathrm{Im}(\gamma_{t-1})  ~\mathrm{by}~ \gamma_{t-1}'((x,y)+U)=\gamma_{t-1}(x).$$
So we have a short exact sequence as follows
$$\CD
 0 @> >> L @>\tau_1 >> W_1 @> \gamma_{t-1}' >> \mathrm{Im}(\gamma_{t-1}) @> >> 0
\endCD$$
by Lemma 3.2.

For $W_2$,  we can set $W_2=\{(y, z)\in L\oplus M_0|\beta_1(y)=\kappa_0(z)\}$ where $y\in L, z\in M_0$.  And
we have some morphisms as follows:
$$\sigma_1: W_2\to L ~\mathrm{by}~ \sigma_1(y, z)=y,$$
$$\sigma_2: W_2\to M_0 ~\mathrm{by}~ \sigma_2(y, z)=z$$
where $y\in L, z\in M_0$.
Set $l': \mathrm{Ker}(\kappa_0)\to W_2$ by $l'(b)=(0, l(b))$ for $b\in T$. So we have a short exact sequence as follows
$$\CD
 0 @> >> \mathrm{Ker}(\kappa_0) @>l' >> W_2 @> \sigma_1  >> L @> >> 0
\endCD$$
by Lemma 3.2.

Now, set $V_{r}':=\mathrm{Im}(\gamma_{t-1})$, $V_{r+1}:= L$, $V_{r+2}:= \mathrm{Ker}(\kappa_0)$ and $V_{r+3}:=T.$
By the Remark 3.3,
we can see that
$$\mathrm{Ext}_{kG}^{f(r-1)-1}(V_{r-1}, V_{r}')\neq 0;$$
$$\mathrm{Ext}_{kG}^{1}(V_{r}', V_{r+1})\neq 0;$$
$$\mathrm{Ext}_{kG}^{1}(V_{r+1}, V_{r+2})\neq 0;$$
$$\mathrm{Ext}_{kG}^{s-1}(V_{r+2}, V_{r+3})\neq 0.$$
Hence, we have a list $k=V_1, V_2, \ldots, V_{r-1}, V_r', V_{r+1}, V_{r+2}, V_{r+3}=T$ satisfies
$$\mathrm{Ext}_{kG}^{f(i)}(V_{i}, V_{i+1})\neq 0$$
where $f(i)$ are positive integers for $i=1,2,\ldots, r+2$.
So, we get the proof of this case.
\end{proof}

\textbf{ACKNOWLEDGMENTS}\hfil\break
The authors would like to thank Southern University of
Science and Technology for their kind hospitality hosting joint meetings of them.

\end{document}